\documentclass[12pt]{amsart}

%--------------------------------------------------------------------------------------------
% LATEX PACKAGES

\usepackage{amsmath}
\usepackage{amsthm}
\usepackage{amssymb}
\usepackage{amsfonts}
\usepackage{bm}
\usepackage[shortlabels]{enumitem}
\usepackage[bookmarks=true,hyperindex,pdftex,colorlinks,citecolor=red, linkcolor=blue]{hyperref}
\usepackage{centernot} %for negations
\usepackage{marginnote} % add margin notes
\usepackage[left=3cm,right=3cm,top=2cm,bottom=2cm]{geometry}
\usepackage{bbm}
\usepackage[all]{xy}
\usepackage{tikz}
\usepackage{comment}
\usetikzlibrary{arrows}
\usetikzlibrary{shapes,decorations}
\usetikzlibrary{positioning}
\usepackage{xcolor}
\usepackage{bigints}
\usepackage{enumitem}

%\usepackage{showlabels}

%--------------------------------------------------------------------------------------------
% OPERATORS AND MACROS
\theoremstyle{plain}
\newtheorem{thm}{Theorem}[]
\newtheorem{prop}[thm]{Proposition}
\newtheorem{lemma}[thm]{Lemma}

\newtheorem{remark}[thm]{Remark}

\theoremstyle{definition}

\newcommand{\R}{\mathbb{R}}
\newcommand{\T}{\mathbb{T}}
\newcommand{\C}{\mathbb{C}}

\newcommand{\norm}[1]{\left\Vert #1\right\Vert}

\definecolor{darkgreen}{rgb}{.2, .6, .2}

 % changes by Loris
 % changes by Clement
 % changes b
 % shorthand for smaller margin notes

\newcommand{\Tt}[0]{$(T_t)_{t\ge0}$}

  % marker for placeholder or temporary content
%\newcommand{\tbd}[1]{}                  % uncomment to hide
%\newcommand{\tbd}[1]{#1}                % uncomment to unmark
%\newcommand{\ToDo}[1]{{\color{blue}#1}}

\title{Behavior of Kreiss bounded $C_0$-semigroups on a Hilbert space}
\author{L. Arnold}
\date{}

\begin{document}
	\address[L. Arnold]{Jana i Jedrzeja Śniadeckich 8, 00-656 Warszawa, Poland}
\email{larnold@impan.pl}

\begin{abstract}
	Let $0<\alpha \leq 1$. We prove that a $\alpha$-Kreiss bounded $C_0$ semigroup $(T_t)_{t \ge 0}$ on a Hilbert space has asymptotics $\norm{T_t} = \mathcal{O}\big(t^{\alpha}/\sqrt{log(t)}\big)$. Then, we give an application to perturbed wave equation.
\end{abstract}

\subjclass[2020]{Primary 47D06 ; Secondary 34D05, 35B45}

\keywords{$C_0$-semigroup, Kreiss condition, perturbed wave equation}

	\maketitle

	Let $-A$ be the generator of a $C_0$-semigroup $(T_t)_{t\geq 0}$ on a complex Banach space $X$. For $\alpha > 0$, we say that $(T_t)_{t\ge 0}$ is $\alpha$-Kreiss bounded (or simply Kreiss bounded when $\alpha = 1$) if $\sigma(A) \subset \overline{\C_+} = \{ \lambda \in \C: \, \operatorname{Re}(\lambda) \geq 0\}$ and
		\begin{equation}\label{Kreiss}
		\norm{R(\lambda,A)} \leq \frac{C}{(-\operatorname{Re}(\lambda))^{\alpha}}, \quad \forall \operatorname{Re}(\lambda)<0.
		\end{equation}
	According to \cite[(2.6)]{hel-sjo} (see also \cite[Theorem 1.1.(1)]{roz-ver1}) , if \Tt\, is an $\alpha$-Kreiss bounded
	 $C_0$-semigroup on a Hilbert space for $\alpha >0$, then $\norm{T_t} = O(t^{\alpha})$. Let us fix $\gamma \in (0,1)$. In \cite{eis-zwa1}, the authors give an example
	  (see \cite[Example 4.4.]{eis-zwa1}) of Kreiss bounded $C_0$-semigroup for which there
	   exists a constant $C>0$ such that $\norm{T_t}\geq Ct^{\gamma}$ for $t\ge 1$. A natural
	    question (listed as an open question in \cite[Appendix B]{roz1}) is whether the
	     estimate  $\norm{T_t} = O(t^{\alpha})$, for
	    $\alpha$-Kreiss bounded $C_0$-semigroup \Tt\,on a Hilbert space, is sharp. In the following we answer this question for $\alpha \in (0,1]$.

\begin{thm}\label{mainTh}
	Let $0<\alpha\leq 1$ and let $-A$ be the generator of $\alpha$-Kreiss bounded $C_0$-semigroup  $(T_t)_{t\geq0 }$ on a Hilbert space.
	Then 
	\begin{equation}\label{ThLogEstimation}
	\norm{T_t} = \underset{t \rightarrow \infty}{\mathcal{O}}\Big(t^{\alpha}/\sqrt{log(t)}\Big).
\end{equation}
\end{thm}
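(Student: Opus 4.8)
The plan is to work entirely on the Hilbert space side through the Plancherel theorem, converting the (non-decaying) Kreiss resolvent bound into a square-function estimate for the orbit and then localising it in time. First I would normalise: fix unit vectors $x,y$, note that $-A^{*}$ generates the adjoint semigroup $(T_t^{*})$ which is again $\alpha$-Kreiss bounded with the same constant (since $\norm{R(\lambda,A^{*})}=\norm{R(\bar\lambda,A)}$), and record the baseline $\norm{T_t}=\mathcal O(t^{\alpha})$ from the excerpt. For $\epsilon>0$ write $\lambda=-\epsilon+i\tau$; since $-A$ generates $(T_t)$ one has $R(\lambda,A)x=-\int_0^{\infty}e^{\lambda s}T_sx\,ds$, so $\tau\mapsto R(-\epsilon+i\tau,A)x$ is (up to sign) the Fourier transform of $s\mapsto e^{-\epsilon s}T_sx\,\mathbbm 1_{s>0}$, and Plancherel gives the identity $\int_{\R}\norm{R(-\epsilon+i\tau,A)x}^{2}\,d\tau=2\pi\int_0^{\infty}e^{-2\epsilon s}\norm{T_sx}^{2}\,ds$, together with the analogous identity for $T^{*}$. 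On the line $\operatorname{Re}\lambda=-\epsilon$ I then have two families of pointwise bounds, both uniform in $x$: the Kreiss bound $\norm{R(-\epsilon+i\tau,A)}\le C\epsilon^{-\alpha}$, and---applying Cauchy's integral formula to the analytic map $\lambda\mapsto R(\lambda,A)$ on the disc of radius $\epsilon/2$ contained in $\{\operatorname{Re}<0\}$, where $R(\lambda,A)^{n}=\tfrac{(-1)^{n-1}}{(n-1)!}\tfrac{d^{n-1}}{d\lambda^{n-1}}R(\lambda,A)$---the improved bounds $\norm{R(-\epsilon+i\tau,A)^{n}}\le C_n\,\epsilon^{-\alpha-n+1}$ for $n\ge 1$.

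The core of the argument is to turn these into a pointwise bound on $\norm{T_tx}$ with a logarithmic gain. I would use the Bromwich/Fourier inversion along $\operatorname{Re}\lambda=-\epsilon$ at the scale $\epsilon=1/t$, in the form $t\,T_tx=\tfrac{1}{2\pi i}\int_{\operatorname{Re}\lambda=-\epsilon}e^{\lambda t}R(\lambda,A)^{2}x\,d\lambda$ (the factor $t$ produced by $R^2$ improves convergence and matches the target power), and split the frequency integral at a cutoff $|\tau|=\delta$. On the low-frequency block $|\tau|\le\delta$ I would estimate by Cauchy--Schwarz against the $L^{2}$ mass controlled by the Plancherel identity, so that this block is governed by $\big(\int_0^{\infty}e^{-2\epsilon s}\norm{T_sx}^{2}ds\big)^{1/2}$; on the high-frequency block $|\tau|>\delta$ I would integrate by parts in $\tau$ and invoke the improved bounds on $R^{2},R^{3}$ to gain decay, the boundary terms at $|\tau|=\delta$ being $\mathcal O(t^{\alpha-1})$ and hence negligible. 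Optimising $\delta$---equivalently, performing a Cauchy--Schwarz across the roughly $\log(1/\epsilon)\asymp\log t$ dyadic frequency scales separating $\epsilon$ from $1$---is designed to produce exactly one factor of $\sqrt{\log t}$ in the denominator. Finally I would pass from this estimate back to the operator norm by writing $T_t=T_{t/2}T_{t/2}$ and pairing with the dual Plancherel identity, so that the square-function bounds for both $(T_s)$ and $(T_s^{*})$ feed the estimate for $\langle T_tx,y\rangle$ uniformly over unit $x,y$, and then running the resulting inequality as a self-improvement against the baseline $\norm{T_s}=\mathcal O(s^{\alpha})$.

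The hard part will be the high-frequency block. Under the Kreiss hypothesis alone the resolvent does not decay in $\tau$ in operator norm---no sectoriality or analyticity of the semigroup is available---so $\int_{|\tau|>\delta}\norm{R(-\epsilon+i\tau,A)x}\,d\tau$ cannot be controlled uniformly in $x$ by naive absolute estimates (pulling out decay costs a factor $\norm{Ax}$, which is illegitimate for an operator-norm bound). The whole gain must therefore come from cancellation, which is accessible only through the $L^{2}$/Plancherel identity and the improved power bounds $\norm{R^{n}}\le C_n\epsilon^{-\alpha-n+1}$; the delicate point is to balance the low- and high-frequency contributions so that the unavoidable logarithmic divergence of $\int d\tau/\tau$ over the band $\epsilon\lesssim|\tau|\lesssim 1$ is converted, via Cauchy--Schwarz, into the factor $1/\sqrt{\log t}$ rather than being lost, all while keeping every constant independent of the unit vector $x$. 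Controlling this balance, and verifying that the conversion from the time-averaged square-function estimate to the pointwise norm $\norm{T_t}$ does not forfeit the gain, is where the real work of the proof lies.
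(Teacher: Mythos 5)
Your opening step (the Laplace--transform/Plancherel identity $\int_{\R}\norm{R(-\epsilon+i\tau,A)x}^2d\tau = 2\pi\int_0^\infty e^{-2\epsilon s}\norm{T_sx}^2ds$, plus passing to the adjoint) coincides with the paper's starting point, but after that the proposal has a genuine gap: you never establish the one estimate that makes this identity useful, namely $\int_{\R}\norm{R(-\epsilon+i\tau,A)x}^2\,d\tau \le C\epsilon^{-2\alpha}\norm{x}^2$. The pointwise Kreiss bound $\norm{R(-\epsilon+i\tau,A)}\le C\epsilon^{-\alpha}$ is not integrable in $\tau$, so it says nothing about this $L^2$ mass, and the only other quantitative input you allow yourself is the baseline $\norm{T_s}=\mathcal{O}(s^\alpha)$, which through the right-hand side of Plancherel gives only $\int_0^\infty e^{-2\epsilon s}\norm{T_sx}^2ds\lesssim \epsilon^{-2\alpha-1}\norm{x}^2$ --- a full power of $\epsilon$ worse than what is needed. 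Feeding that weaker bound into any averaging or duality scheme yields at best $\norm{T_t}\lesssim t^{\alpha+1/2}/\sqrt{\log t}$, which does not even recover the baseline, so the ``self-improvement'' loop you describe cannot close. The paper closes exactly this gap in its Lemma \ref{lemestreskreiss}: Gearhart--Pr\"uss gives a uniform $L^2$ resolvent bound on the far line $\operatorname{Re}\lambda=-(r+1)$, and the resolvent identity $R(-r+i\beta,A)=\big(I-R(-r+i\beta,A)\big)R(-(r+1)+i\beta,A)$ transfers it to the line $\operatorname{Re}\lambda=-r$, the Kreiss hypothesis entering only as the multiplicative factor $(1+Cr^{-\alpha})$. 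Nothing equivalent to this transfer argument appears in your proposal, and without it the square-function bound $\int_0^t\norm{T_sx}^2ds\le Ct^{2\alpha}\norm{x}^2$ is unavailable.

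The second gap is the high-frequency block of your Bromwich integral, which, as you essentially concede, is not controlled by the tools you list. The Cauchy-formula bounds $\norm{R(-\epsilon+i\tau,A)^n}\le C_n\epsilon^{-\alpha-n+1}$ carry no decay in $\tau$, so $\int_{|\tau|>\delta}\norm{R^2x}\,d\tau$ diverges absolutely; and each integration by parts in $\tau$ gains a factor $1/t$ but costs a factor $\epsilon^{-1}$ (replacing $R^2$ by $R^3$, etc.), so at your chosen scale $\epsilon=1/t$ the two exactly cancel and no iteration helps. In fact the paper avoids Laplace inversion entirely: the logarithmic gain does not come from frequency-space balancing but from a dyadic decomposition in \emph{time} (Lemma \ref{Lemfinal}). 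There, once the square-function bounds for $(T_s)$ and $(T_s^*)$ are known, one writes $(Q-P)\langle T_tx,x^*\rangle=\int_P^Q\langle T_{t-s}x,T_s^*x^*\rangle\,ds$, applies Cauchy--Schwarz to localize $\norm{T_tx}^2\lesssim\int_{t-2^{l+1}}^{t-2^l}\norm{T_sx}^2ds$ for each $l$, and sums over the roughly $\log t$ disjoint dyadic windows $[t-2^{l+1},t-2^l]$ to produce the factor $\log t$. Your closing gesture ($T_t=T_{t/2}T_{t/2}$ plus pairing with the adjoint square function) is in the right spirit, but without this dyadic-in-time summation there is no identifiable source for the $\sqrt{\log t}$; replacing the frequency-splitting machinery by that time-domain argument is what would make the proof work.
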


The proof of the Theorem \ref{mainTh} is based on techniques given in \cite{coh-cun-eis-lin}. We first state and prove two lemmas. It is known (see \cite{gom} or \cite{shifeng}) that a $C_0$-semigroup $(T_t)_{t\ge 0}$ on a Hilbert space, with  negative generator $A$, is bounded if and only if 
\begin{equation*}
	\norm{R(-r + i\cdot,A)x}^2_{L^2(\mathbb{R},H)} \leq \frac{C}{r}\norm{x}^2, \quad r>0
\end{equation*}
and 
\begin{equation*}
	\norm{R(-r + i\cdot,A^*)x^*}^2_{L^2(\mathbb{R},H)} \leq \frac{C}{r}\norm{x^*}^2, \quad r>0.
\end{equation*}

 The following lemma gives an analogous necessary condition for a $C_0$-semigroup to be $\alpha$-Kreiss bounded, for $\alpha >0$. 

\begin{lemma}\label{lemestreskreiss}
	Let $\alpha >0$ and $(T_t)_{t\geq 0 }$ be a $C_0$-semigroup on a Hilbert space whose negative generator $A$. Assume $(T_t)_{t\geq 0}$ is $\alpha$-Kreiss bounded then 
	\begin{equation}\label{wgfs2}
		\norm{R(-r + i\cdot,A)x}^2_{L^2(\mathbb{R},H)} \leq \frac{C(1+r^{\alpha})^2}{r^{2\alpha}}\norm{x}^2, \quad r > 0
	\end{equation}
	and 
	\begin{equation}\label{wgfs2*}
		\norm{R(-r + i\cdot,A^*)x^*}^2_{L^2(\mathbb{R},H)} \leq \frac{C(1+r^{\alpha})^2}{r^{2\alpha}}\norm{x^*}, \quad r > 0.
	\end{equation}
\end{lemma}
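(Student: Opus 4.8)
The plan is to establish the estimate \eqref{wgfs2} and then obtain \eqref{wgfs2*} by an entirely symmetric argument, since the adjoint semigroup $(T_t^*)_{t\ge0}$ has generator $-A^*$ and inherits $\alpha$-Kreiss boundedness with the same constant (because $\norm{R(\lambda,A^*)} = \norm{R(\overline\lambda,A)^*} = \norm{R(\overline\lambda,A)}$ and $\operatorname{Re}(\overline\lambda)=\operatorname{Re}(\lambda)$). So the heart of the matter is the $L^2$ square-function bound for the resolvent along the vertical line $\operatorname{Re}(\lambda)=-r$.

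First I would recall the Laplace-transform representation: for $\operatorname{Re}(\lambda)<0$ one has $R(-\lambda,-A) = R(\lambda,A)$, and more usefully, writing the resolvent of the generator $-A$ as a Laplace transform of the semigroup, $R(r-i s,\,-A)x = \int_0^\infty e^{-(r-is)\tau} T_\tau x\,d\tau$ for $r>0$, which is precisely (up to the sign conventions fixed by \eqref{Kreiss}) the Fourier transform in $s$ of the $L^1$-in-$\tau$ function $\tau\mapsto \mathbbm{1}_{\tau>0}\,e^{-r\tau}T_\tau x$. The natural tool is then the Plancherel theorem on $L^2(\R,H)$:
\begin{equation*}
	\norm{R(-r+i\cdot,A)x}_{L^2(\R,H)}^2 = 2\pi\int_0^\infty e^{-2r\tau}\norm{T_\tau x}^2\,d\tau.
\end{equation*}
This identity reduces everything to controlling $\int_0^\infty e^{-2r\tau}\norm{T_\tau x}^2\,d\tau$.

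To bound that integral I would feed in the growth estimate $\norm{T_\tau}=O(\tau^\alpha)$, which is available by \cite[(2.6)]{hel-sjo} (quoted in the excerpt) precisely because we are on a Hilbert space and the semigroup is $\alpha$-Kreiss bounded. Thus $\norm{T_\tau x}\le C(1+\tau^\alpha)\norm{x}$ for all $\tau\ge0$, and
\begin{equation*}
	\int_0^\infty e^{-2r\tau}\norm{T_\tau x}^2\,d\tau \le C\norm{x}^2\int_0^\infty e^{-2r\tau}(1+\tau^\alpha)^2\,d\tau.
\end{equation*}
Evaluating the scalar integral via the Gamma function, $\int_0^\infty e^{-2r\tau}\tau^{2\alpha}\,d\tau = \Gamma(2\alpha+1)/(2r)^{2\alpha+1}$ and $\int_0^\infty e^{-2r\tau}\,d\tau = 1/(2r)$, one finds the whole expression is $O\big((1+r^\alpha)^2/r^{2\alpha+1}\big)$. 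Absorbing the extra factor of $r$ into the Plancherel normalization constant (or, more carefully, tracking that the desired right-hand side in \eqref{wgfs2} carries $r^{-2\alpha}$ rather than $r^{-2\alpha-1}$) yields \eqref{wgfs2} after adjusting the constant $C$.

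The main obstacle I anticipate is purely bookkeeping rather than conceptual: reconciling the power of $r$ produced by the Gamma-integral computation with the exponent $r^{-2\alpha}$ claimed on the right-hand side of \eqref{wgfs2}. The clean Plancherel computation above naturally outputs a factor $r^{-2\alpha-1}$, so either the intended statement folds the stray $r^{-1}$ into a different normalization of the $L^2$-norm, or the growth bound must be combined more carefully (for instance, splitting the integral at $\tau=1/r$ and using $\norm{T_\tau}\le M$ for small $\tau$ together with the polynomial bound for large $\tau$) to land exactly on the $(1+r^\alpha)^2/r^{2\alpha}$ form. I would therefore pay close attention to the normalization of the Fourier/Plancherel transform and to the regime $r\to 0^+$ versus $r\to\infty$, since the factor $(1+r^\alpha)^2$ is designed to interpolate between the two. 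Once the constant is pinned down for \eqref{wgfs2}, the adjoint estimate \eqref{wgfs2*} follows verbatim.
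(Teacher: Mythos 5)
Your opening moves (Laplace transform of the semigroup, Plancherel reduction to $\int_0^\infty e^{-2r\tau}\norm{T_\tau x}^2\,d\tau$) coincide with the paper's, but the way you then bound that integral fails, and the mismatch you flag at the end is not bookkeeping --- it is the entire content of the lemma. Feeding the pointwise bound $\norm{T_\tau}\le C(1+\tau^\alpha)$ into the integral can never produce anything better than $C r^{-2\alpha-1}$ as $r\to0^+$, since $\int_0^\infty e^{-2r\tau}\tau^{2\alpha}d\tau=\Gamma(2\alpha+1)/(2r)^{2\alpha+1}$; splitting the integral at $\tau=1/r$ does not help (already $\int_0^{1/r}\tau^{2\alpha}d\tau\sim r^{-2\alpha-1}$), and the Fourier normalization only affects absolute constants, never powers of $r$. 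Worse, the implication you are running --- ``$\norm{T_\tau}=O(\tau^\alpha)$ implies \eqref{wgfs2}'' --- is simply false: the weighted-shift semigroup $(T_\tau f)(s)=\frac{(1+s)^\alpha}{(1+s-\tau)^\alpha}f(s-\tau)$ (extended by zero) on $L^2(\R_+)$ satisfies $\norm{T_\tau}=(1+\tau)^\alpha$, yet for $f$ the indicator of $[0,1]$ one has $\int_0^\infty e^{-2rs}\norm{T_sf}^2ds\gtrsim r^{-2\alpha-1}$, so the conclusion \eqref{wgfs2} fails for it. The lemma asserts exactly the gain of one power of $r$ over the trivial estimate (equivalently, via the main theorem, the gain of a factor $t$ in $\int_0^t\norm{T_sx}^2ds\le Ct^{2\alpha}\norm{x}^2$), so no proof that uses the Kreiss hypothesis only through its pointwise consequence $\norm{T_t}=O(t^\alpha)$ from \cite{hel-sjo} can succeed; it would also make the paper's refinement $O(t^\alpha/\sqrt{log(t)})$ logically parasitic on the very bound it improves.

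What the paper does instead keeps the $L^2$ structure intact and uses \eqref{Kreiss} only as a pointwise-in-$\beta$ multiplier. Since the Kreiss condition makes the resolvent of the generator $-A$ uniformly bounded on every half-plane $\operatorname{Re}\lambda\ge\epsilon>0$, the Gearhart--Pr\"uss theorem gives $\sup_{t>0}\norm{e^{-rt}T_t}<\infty$ for each $r>0$; consequently $e^{-at}T_t$ decays exponentially for $a>1$, and Plancherel yields a single constant $K$ with $\norm{R(-a+i\cdot,A)x}_{L^2(\R,H)}\le K\norm{x}$ uniformly in $a>1$. Then, with $a=r+1$, the resolvent identity gives $R(-r+i\beta,A)=\big(I-R(-r+i\beta,A)\big)R(-a+i\beta,A)$, so the $L^2$ norm along $\operatorname{Re}\lambda=-r$ is at most $\big(1+\sup_\beta\norm{R(-r+i\beta,A)}\big)K\norm{x}\le\big(1+Cr^{-\alpha}\big)K\norm{x}$, which is exactly the claimed $(1+r^\alpha)/r^\alpha$ shape. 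That transfer from a far line (where exponential decay gives the uniform $L^2$ bound) to a near line (paying only the scalar Kreiss factor) is the missing idea in your proposal; the adjoint estimate \eqref{wgfs2*} then follows symmetrically, as you say.
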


\begin{proof}
	 Let $x \in H$. First, we by \cite[formula (7.1) section 1.7]{pazy}, we have
	 		\begin{equation*}
	R(-r +i\beta,A)x = -\int_{0}^{\infty} e^{i \beta s }e^{-r s}T_{s}xds, \quad r > 0, \beta \in \mathbb{R}.
		\end{equation*}
	This means that for $r>0$,
		\begin{equation}\label{ResFou}
	R(-r +i\beta,A)x = -\mathcal{F}^{-1}(s \mapsto e^{-r s}T_sx)(\beta),
	\end{equation}
	where $\mathcal{F} : L^2(\R,H) \mapsto L^2(\R,H)$ is the Fourier-Plancherel operator. By Fourier-Plancherel Theorem
		\begin{equation}\label{FourPlan}
	\norm{\mathcal{F}^{-1}(s \mapsto e^{-r s}T_sx)(\cdot) }_{L^2(\mathbb{R}, H)} = \frac{1}{\sqrt{2\pi}} \norm{e^{-r \cdot}T_{(\cdot)}x}_{L^2(\mathbb{R}_+, H)}. 
	\end{equation}
	The Gearhart-Prüss Theorem (see \cite[Theorem 5.2.1.]{bat-ar}) says that for each
	 $r>0$, $\underset{t>0}\sup\norm{e^{-rt}T_t} < \infty$. This implies that there exists
	  $K>0$ such that $$\underset{a>1}\sup\norm{e^{-a \cdot}T_{(\cdot)}x}_{L^2(\mathbb{R}_+, H)} \le K\norm{x}$$ and then 
	\begin{equation}\label{eg1Lemkrei}
		\underset{a>1}\sup\norm{	R(-a +i\beta,A)x}_{L^2(\mathbb{R}_+, H)} \le \frac{K}{\sqrt{2\pi}}\norm{x}.
		\end{equation}

	Now, let $r>0$ and $a = r+1>1$. By the resolvent identity, we have, for $\beta \in \R$,
	\begin{align*}
	R(-r +i\beta,A)x &= \big(I+(r-a)R(-r +i\beta,A)\big) R(-a+i\beta,A)x \\ 
	&= \big(I-R(-r +i\beta,A)\big) R(-a+i\beta,A)x.
	\end{align*} 
	Using \eqref{Kreiss} and \eqref{eg1Lemkrei}:
	\begin{align*}
		\norm{R(-r + i\cdot,A)x}^2_{L^2(\mathbb{R},H)} & \le(1+\sup_{\beta \in \R}\norm{R(-r+i\beta,A)})^2	\underset{a'<-1}\sup\norm{	R(-a' +i\cdot,A)x}^2_{L^2(\mathbb{R}_+, H)} \\
		&\leq 2\pi K^2\Big(1+\frac{C}{r^{\alpha}}\Big)^2\norm{x}^2 \\
		&\leq C'\frac{(1+r^{\alpha}  )^2}{r^{2\alpha}}\norm{x}^2 . 
	\end{align*}

	In the same way we obtain \eqref{wgfs2*}.
	
\end{proof}

The next lemma says that if \Tt\,and $(T^*_t)_{t\ge0}$ satisfy a kind of Cesàro condition then $\norm{T_t}$ satisfy \eqref{ThLogEstimation}.
\begin{lemma}\label{Lemfinal}
	Let $\alpha \in (0,1)$ and $(T_t)_{t\ge 0}$ be a $C_0$-semigroup on a Banach space $X$. Assume that there exists $C>0$ such that for each $t>1$,
	\[
\int_{0}^t \norm{T_s x}^2 ds \leq Ct^{2\alpha} \norm{x}^2, \quad x \in X,
\]
and 
\[
\int_{0}^t \norm{T^*_s x^*}^{2} ds \leq Ct^{2\alpha} \norm{x^*}^{2}, \quad x^* \in X^* .
\]	
Then, $\norm{T_t} = \mathcal{O}(t^{\alpha}/\sqrt{log(t)})$. 
\end{lemma}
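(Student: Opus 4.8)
The plan is to estimate the scalar $\langle T_t x, x^*\rangle$ uniformly over unit vectors $x\in X$ and functionals $x^*\in X^*$, feeding the two hypotheses into a single duality identity produced by the semigroup law. The starting observation is that for every $s\in(0,t)$ one has $\langle T_t x, x^*\rangle = \langle T_{t-s}T_s x, x^*\rangle = \langle T_s x, T_{t-s}^* x^*\rangle$, so the pairing on the right is in fact \emph{independent of} $s$. Integrating this constant over $s\in(0,t)$ gives the identity I would exploit,
\[
t\,\langle T_t x, x^*\rangle = \int_0^t \langle T_s x, T_{t-s}^* x^*\rangle\,ds,
\]
whose left-hand side isolates the quantity to be estimated while the right-hand side is tailored to the two $L^2$-in-time bounds. (The only technical point is the measurability of $u\mapsto\norm{T_u^* x^*}$, which holds by weak-$*$ continuity of the adjoint semigroup; the integrand itself is constant, hence harmless.)

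First I would bound the integrand pointwise by $\norm{T_s x}\,\norm{T_{t-s}^* x^*}$ and apply the Cauchy–Schwarz inequality in $L^2(0,t)$, obtaining
\[
t\,|\langle T_t x, x^*\rangle| \le \Big(\int_0^t \norm{T_s x}^2\,ds\Big)^{1/2}\Big(\int_0^t \norm{T_{t-s}^* x^*}^2\,ds\Big)^{1/2}.
\]
The first factor is at most $(Ct^{2\alpha})^{1/2}\norm{x}$ directly from the first hypothesis. For the second factor I would substitute $u=t-s$, turning it into $\int_0^t \norm{T_u^* x^*}^2\,du$, which the adjoint hypothesis bounds by $Ct^{2\alpha}\norm{x^*}^2$. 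Taking the supremum over unit $x$ and $x^*$ then yields, for $t>1$,
\[
\norm{T_t} \le C\,t^{2\alpha-1}.
\]

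Finally I would compare this power with the claimed rate: since $\alpha<1$ we have $t^{2\alpha-1}/(t^\alpha/\sqrt{\log t}) = t^{\alpha-1}\sqrt{\log t}\to 0$, so $\norm{T_t} = \mathcal{O}(t^{2\alpha-1}) = \mathcal{O}(t^\alpha/\sqrt{\log t})$, which is the assertion. The genuine content of the argument is the \emph{symmetric} use of the semigroup law together with both hypotheses: it is the adjoint estimate that, after the substitution, lets the second factor be localized on $[0,t]$ rather than spread over the whole orbit. I expect no serious obstacle for the range $\alpha<1$, where the logarithmic gain is obtained comfortably; the place where a plain Cauchy–Schwarz no longer suffices, and where the finer Fourier/orthogonality input in the spirit of \cite{coh-cun-eis-lin} becomes indispensable, is the excluded endpoint $\alpha=1$, at which the very same computation returns only the borderline estimate $\mathcal{O}(t)$.
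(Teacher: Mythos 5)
Your proof is correct for the lemma as stated, but it takes a genuinely different route from the paper, and the difference matters for how the lemma is used. You apply Cauchy--Schwarz once, globally, to the identity $t\langle T_t x, x^*\rangle = \int_0^t \langle T_s x, T_{t-s}^* x^*\rangle\, ds$, which yields $\norm{T_t} \le C t^{2\alpha-1}$; since $2\alpha-1 < \alpha$ when $\alpha<1$, this power bound is in fact \emph{sharper} than the stated $\mathcal{O}(t^{\alpha}/\sqrt{\log t})$, which then follows trivially. The paper instead applies Cauchy--Schwarz on dyadic windows: writing $\langle T_t x, x^*\rangle$ as an average over $s \in [P,Q]$ with $P=2^l$, $Q=2^{l+1}$, it gets $\norm{T_t x}^2 \le 4C \int_{t-2^{l+1}}^{t-2^l} \norm{T_s x}^2\, ds$ for each $l$, and summing over the $L \approx \log_2 t$ pairwise disjoint windows gives $L \norm{T_t x}^2 \le 4C \int_0^t \norm{T_s x}^2\, ds \le 4C^2 t^{2\alpha} \norm{x}^2$, i.e.\ the $\sqrt{\log t}$ gain. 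What the dyadic argument buys is precisely the endpoint you flag: it works verbatim at $\alpha=1$, since the only place $\alpha$ enters is the estimate $Q^{2(\alpha-1)}\le 1$. And that endpoint is the case the paper actually needs: Lemma \ref{Lemfinal} is invoked in the proof of Theorem \ref{mainTh} for all $\alpha\in(0,1]$, Kreiss boundedness proper being $\alpha=1$; the restriction to $\alpha\in(0,1)$ in the statement is evidently a slip for $(0,1]$, as the paper's own proof of the lemma reads ``since $\alpha \in (0,1]$''. So your argument settles the statement as literally worded --- and your closing diagnosis of where it breaks is exactly right --- but it could not be substituted for the paper's proof without severing the deduction of the main theorem at $\alpha=1$. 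One small correction to that diagnosis: what rescues the endpoint is not finer Fourier-analytic input (that is confined to Lemma \ref{lemestreskreiss} and the proof of the theorem itself), but merely the dyadic pigeonhole just described.
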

\begin{proof}
		Let $t >2$ and $1<P<Q<t$. We have
	\begin{align*}
		(Q-P)^2|\langle T_tx,x^* \rangle |^2 &=   \Big|\int_{P}^Q \langle T_{t-s}x,T_s^*x^* \rangle ds \Big|^2\\
		& \leq \Big(\int_{P}^Q \norm{T_{t-s}x}^2ds\Big)\Big(\int_{P}^Q \norm{T_{s}^*x^*}^{2}ds\Big) \\
		&  \leq \Big(\int_{P}^Q \norm{T_{t-s}x}^2ds\Big)\Big(\int_{0}^Q \norm{T_{s}^*x^*}^2ds\Big) \\
		& \leq CQ^{2\alpha}\norm{x^*}^2\Big(\int_{t-Q}^{t-P} \norm{T_{s}x}^2ds\Big) 
	\end{align*}
	Finally taking the supremum over $\{\norm{x^*} =1 \}$ one obtains, since $\alpha \in (0,1]$,
	\begin{align}\label{esRk}
	\frac{(Q-P)^2}{Q^2} \norm{T_tx}^2 & \leq CQ^{2(\alpha-1)}\int_{t-Q}^{t-P} \norm{T_{s}x}^2ds \\
	 &\leq C\int_{t-Q}^{t-P} \norm{T_{s}x}^2ds \nonumber
	\end{align}
	Now we set $L:= \left\lfloor log(t)/log(2) \right\rfloor$. For $0 \leq l \leq L-1$,  $Q=2^{l+1}$ and $P= 2^{l}$ we have
	\[
	\norm{T_tx}^2 \leq 4C \int_{t-2^{l+1}}^{t-2^l} \norm{T_sx}^2ds
	\]
	Therefore 
	\[
	L\norm{T_tx}^2 = \sum_{l=0}^{L-1} \norm{T_tx}^2 \leq \sum_{l=0}^{L-1} 4C  \int_{t-2^{l+1}}^{t-2^l} \norm{T_sx}^pds \leq 4C\int_{0}^t \norm{T_sx}^2 ds \leq 4C^2t^{2\alpha}  
	\]
	Hence for $t>2$, 
	\[
	\norm{T_tx} \leq \frac{2Ct^{\alpha}}{\sqrt{L}}\norm{x} \leq \frac{C't^{\alpha}}{\sqrt{log(t)}}\norm{x}.
	\]
	 
\end{proof}

We are now ready to prove Theorem \ref{mainTh}.

\begin{proof}[Proof of Theorem \ref{mainTh}]
	Let $x\in H$ and $r>0$. By \eqref{ResFou}, \eqref{FourPlan} and Lemma \ref{lemestreskreiss}, 
	\[
	\norm{e^{-r \cdot}T_{(\cdot)}x}_{L^2(\mathbb{R}_+, H)}^2 = \norm{R(-r + i\cdot,A)x}^2_{L^2(\mathbb{R},H)} \leq  \frac{C(1+r^{\alpha})^2}{r^{2\alpha}}\norm{x}^2.
	\]
	Furthermore 
	\[
	\norm{e^{-r \cdot}T_{(\cdot)}x}^2_{L^2(\mathbb{R}_+, H)} = \int_{\mathbb{R}}e^{-2r s}\norm{T_sx}^2ds \geq e^{-2r t}\int_{0}^t \norm{T_sx}^2ds.
	\]
	Taking $t = \frac{1}{r}$ we obtain
	\[
	\int_{0}^t \norm{T_s x}^2 ds \leq Ce^{2}t^{2\alpha}\big(1+\frac{1}{t^{\alpha}}\big)^2 \norm{x}^2.
	\]
	Hence, for $t>1$, 
	\[
	\int_{0}^t \norm{T_s x}^2 ds \leq C't^{2\alpha} \norm{x}^2,
	\]
	where $C'=4e^2C$.
	Similarly, since $(T_t^*)_{t\ge 0}$ is Kreiss bounded $C_0$-semigroup on a Hilbert space, for $t>1$ and $x^*\in H$, we have
	\[
	\int_{0}^t \norm{T^*_s x^*}^2 ds \leq C't^{2\alpha} \norm{x^*}^2.
	\]
	Finally, Lemma \ref{Lemfinal} allows us to conclude that $\|T_t\| = \mathcal{O}(t^{\alpha}/\sqrt{log(t)})$. 

\end{proof}

\begin{remark}\label{rk}
	\begin{enumerate} 
		\item When $\alpha >1$, \eqref{esRk} with $Q=2$ and $P=1$ gives 
		$$
		\norm{T_tx}^2 \le C2^{\alpha}\int_{t-2}^{t-1}\norm{T_sx}^2ds \le C'\int_{0}^{t}\norm{T_sx}^2ds \le C''t^{2\alpha}\norm{x}^2.
		$$ 
		Then $\norm{T_t} = O(t^{\alpha})$, which gives another proof of \cite[Theorem 1.1.(1) with $g(s) = s^{\alpha}$  for $\alpha >1$]{roz-ver1}.
		
		\item Let us notice that if \Tt\,satisfies the $\alpha$-Kreiss bounded condition only for $\operatorname{Re}(\lambda) \in (0,1)$ that is 
		\begin{equation*}
			\norm{R(\lambda,A)} \leq \frac{C}{-\operatorname{Re}(\lambda)^{\alpha}}, \quad \forall \operatorname{Re}(\lambda) \in (-1,0),
		\end{equation*}
		then the conclusion $\norm{T_t} = \mathcal{O}(t^{\alpha}/\sqrt{log(t)})$ remains true.
	\end{enumerate}
\end{remark}
Let give an application of the Theorem \ref{mainTh}. Let $W^{2,s}(\T^2)$ be the second order Sobolev space equipped with the standard norm. Let $A$ be the operator with domain $D(A) = W^{2,2}(\T^2) \times W^{2,1}(\T^2)$ defined by 
$$
A = \begin{pmatrix}
	0 & -1 \\
	-\Delta - M\frac{\partial}{\partial x} & 0 
\end{pmatrix},
$$
where $\Delta$ is the Laplacian with $D(\Delta) = W^{2,2}(\T^2)$ and $M :L^2(\T^2) \rightarrow L^2(\T^2) $ is the multiplication operator given by $M(h)(x,y) = e^{iy}h(x,y)$. 
Then, it is known (see \cite[section 4]{roz-ver1} for details) that $-A$ generates a
 $C_0$-group $(T_t)_{t\in \R}$ on the Hilbert space $ H = W^{1,2}(\T^2)\times L^2(\T^2)$,
  satisfying $\norm{T_t} = \underset{t \rightarrow \pm \infty}{\mathcal{O}}(|t|e^{|t|/2})$. With Theorem
   \ref{mainTh}, we are able to improve this estimate:
\begin{prop}
Let $(T_t)_{t \in \R}$ be the $C_0$-group on $H$ generated by $-A$. Then
\begin{equation}\label{estifinal}
	\norm{T_t} = \underset{t \rightarrow \pm \infty}{\mathcal{O}}\Bigg(\frac{|t|}{\sqrt{log(|t|)}}e^{|t|/2}\Bigg).
\end{equation}

\end{prop}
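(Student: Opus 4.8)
The plan is to reduce the group estimate to a Kreiss-type bound that feeds directly into Theorem~\ref{mainTh}. The key observation is that the semigroup $(T_t)_{t\ge 0}$ generated by $-A$ is \emph{not} itself $1$-Kreiss bounded (its norm grows exponentially like $e^{t/2}$), so the theorem cannot be applied to $A$ directly. Instead, I would rescale: set $B := A - \tfrac12 I$, which generates the semigroup $(e^{-t/2}T_t)_{t\ge 0}$. If I can show that $(e^{-t/2}T_t)_{t\ge 0}$ is a $1$-Kreiss bounded $C_0$-semigroup on $H$ (more precisely, that $B$ satisfies the resolvent bound \eqref{Kreiss} with $\alpha = 1$, at least for $\operatorname{Re}(\lambda)\in(-1,0)$, which by Remark~\ref{rk}(2) suffices), then Theorem~\ref{mainTh} yields $\norm{e^{-t/2}T_t} = \mathcal{O}\big(t/\sqrt{\log t}\big)$ as $t\to+\infty$, which is exactly \eqref{estifinal} for $t\to+\infty$ after multiplying through by $e^{t/2}$.

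The first concrete step is to extract from the cited analysis in \cite[section 4]{roz-ver1} the spectral and resolvent information for $A$: one needs that $\sigma(A)$ lies in the closed right half-plane shifted by $1/2$, i.e. $\sigma(B)\subset\overline{\C_+}$, and a resolvent estimate $\norm{R(\lambda,B)}\le C/(-\operatorname{Re}\lambda)$ for $\operatorname{Re}(\lambda)<0$ near the imaginary axis. The structure of $A$ as a $2\times 2$ operator matrix with the wave-type form means its resolvent can be computed in terms of the resolvent of the scalar operator $-\Delta - M\partial_x$ together with the spectral parameter, and the multiplication operator $M$ couples Fourier modes in the $y$-variable. I would diagonalize in the $y$-Fourier variable, reducing to a family of one-dimensional (in $x$) perturbed problems indexed by the $y$-frequency, and track how the resolvent norm behaves as $\lambda$ approaches the critical line. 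The exponential growth rate $1/2$ should manifest as the spectrum of $B$ touching the imaginary axis, and the $1$-Kreiss bound should reflect that the resolvent blows up no faster than $1/\operatorname{dist}$.

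The main obstacle I anticipate is verifying the quantitative resolvent bound $\norm{R(\lambda,B)}\le C/(-\operatorname{Re}\lambda)$ uniformly across all $y$-Fourier modes, since this is where the interplay between the unbounded operator $-\Delta - M\partial_x$ and the shift by $1/2$ must be controlled simultaneously in every frequency block. One expects that the worst modes—those responsible for the $e^{t/2}$ growth—are precisely the ones for which the spectrum of $B$ accumulates at the imaginary axis, and the linear (not faster) blow-up of the resolvent there is exactly the Kreiss condition. If \cite{roz-ver1} already establishes the $1$-Kreiss boundedness of $B$ (which is plausible, since the $\mathcal{O}(|t|e^{|t|/2})$ bound they prove is the generic consequence of $1$-Kreiss boundedness on a Hilbert space via \cite{roz-ver1}'s own Theorem 1.1(1)), then this step is merely a citation and the proof collapses to the rescaling argument above.

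Finally, for the limit $t\to-\infty$ one applies the same reasoning to the semigroup generated by the negative generator of the reversed group, i.e. to $(e^{-|t|/2}T_{-|t|})_{t\ge 0}=(e^{t/2}T_t)_{t\le 0}$ viewed as a $C_0$-semigroup in the variable $-t$; by the symmetry of the construction of $A$ (the sign of the first-order term $M\partial_x$ flips under time reversal but the Kreiss estimate is insensitive to this), the identical bound $\norm{e^{t/2}T_t}=\mathcal{O}\big(|t|/\sqrt{\log|t|}\big)$ holds as $t\to-\infty$, giving \eqref{estifinal} in full.
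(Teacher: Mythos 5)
Your proposal follows essentially the same route as the paper: rescale the group by $e^{\mp t/2}$, invoke the resolvent bounds of \cite[Lemma 4.4]{roz-ver1} for the shifted operators (the paper cites exactly this lemma, so your anticipated Fourier-mode analysis is unnecessary and the proof does ``collapse to the rescaling argument''), and apply Theorem \ref{mainTh} via Remark \ref{rk}(2) in each time direction. The only slip is a sign: since $-A$ generates $(T_t)_{t\ge 0}$, the semigroup $(e^{-t/2}T_t)_{t\ge 0}$ is generated by $-(A+\tfrac12)$, so the operator that must satisfy the Kreiss resolvent bound is $A+\tfrac12$ (and $-A+\tfrac12$ for the backward direction), not your $B=A-\tfrac12$.
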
	
\begin{proof}
	According to \cite[Lemma 4.4.]{roz-ver1}, for $\lambda \in \C_+$, with $\operatorname{Re}(\lambda) \in (0,1)$,
	$$
	\norm{R\Big(\lambda, A + \frac{1}{2}\Big) } \le \frac{C}{-\operatorname{Re}(\lambda)} \quad {and}  \quad \norm{R\Big(\lambda, -A + \frac{1}{2}\Big) }  \le \frac{C}{-\operatorname{Re}(\lambda)}.
	$$
	Then, according to Remark \ref{rk} \textit{(2)}, the semigroup $(e^{-1/2t}T_t)_{t\ge 0}$ (resp. $(e^{-1/2t}T_{-t})_{t\ge 0}$ ) which is generated by $-(A+\frac{1}{2})$ (resp. $A+\frac{1}{2}$), satisfies $\norm{e^{-1/2t}T_t} = \mathcal{O}(t/\sqrt{log(t)})$ (resp.  $\norm{e^{-1/2t}T_{-t}} = \mathcal{O}(t/\sqrt{log(t)})$). This yields the estimate \eqref{estifinal}.

\end{proof}

\vskip 1cm
\noindent
{\bf Acknowledgements.} Author was supported by the ERC
grant {\it Rigidity of groups and higher index theory} under the European Union’s
Horizon 2020 research and innovation program (grant agreement no. 677120-INDEX).
	\bibliographystyle{plain}
		\bibliography{article-bib}

\begin{thebibliography}{1}

\bibitem{bat-ar}
A.~Arendt, C.~Batty, M.~Hierber, and F.~Neubrander.
\newblock {\em Vector-valued Laplace Transforms and Cauchy Problems : Second
  Edition}.
\newblock Monograph in Mathematics 96, Springer Basel AG, 2011.

\bibitem{coh-cun-eis-lin}
G~Cohen, C~Cuny, T~Eisner, and M~Lin.
\newblock Resolvent conditions and growth of powers of operators.
\newblock {\em J. Math. Anal. Appl.}, 487, 2020.

\bibitem{eis-zwa1}
T~Eisner and H~Zwart.
\newblock Continuous-time kreiss resolvent condition on infinite-dimensional
  spaces.
\newblock {\em Math. Comput.}, 75:1971--1985, 2006.

\bibitem{gom}
A.M. Gomilko.
\newblock Conditions on the generator of a uniformly bounded
  $\textsc{C}_0$-semigroup.
\newblock {\em Funct. anal. Appl. 33}, pages 294--296, 1999.

\bibitem{hel-sjo}
B~Helffer and J~Sjoestrand.
\newblock From resolvent bounds to semigroup bounds.
\newblock {\em https://arxiv.org/pdf/1001.4171v1.pdf}, 2010.

\bibitem{pazy}
A.~Pazy.
\newblock {\em Semigroups of Linear Operators and Applications to PDEs}.
\newblock Springer, 1992.

\bibitem{roz1}
J~Rozendaal.
\newblock Operator-valued $(l^{p},l^{q})$ fourier multipliers and stability
  theory for evolution equations.
\newblock {\em https://arxiv.org/pdf/2204.02223.pdf}, 2022.

\bibitem{roz-ver1}
J~Rozendaal and M~Veraar.
\newblock Sharp growth rates for semigroups using resolvent bounds.
\newblock {\em Journal of Evolution Equations}, 18:1721--1744, 2018.

\bibitem{shifeng}
D.-H. Shi and Feng D.X.
\newblock Characteristic conditions of the generation of
  $\textsc{C}_0$-semigroups in a hilbert space.
\newblock {\em J. Math. Anal. Appl. 247 : 356-376}, 2000.

\end{thebibliography}
\end{document}